\documentclass[11pt]{article}

\usepackage[margin=1in]{geometry}
\usepackage{amsmath,amssymb,amsthm}
\usepackage{url}
\newtheorem{theorem}{Theorem}[section]
\newtheorem{lemma}[theorem]{Lemma}
\newtheorem{corollary}[theorem]{Corollary}

\newcommand{\ex}{\operatorname{ex}}

\title{Linear extremal bounds for a family of forbidden $0$-$1$ matrices}
\author{Jesse Geneson}
\date{}

\begin{document}

\maketitle

\begin{abstract}
Fulek defined the $0$-$1$ matrix
\[
L_3=\begin{pmatrix}
1&0&0&1&0\\
0&0&0&0&1\\
0&1&1&0&0
\end{pmatrix}
\]
and asked whether $\ex(n,L_3) = O(n)$. We prove that every $r\times s$ $0$-$1$ matrix avoiding $L_3$ has at most
$27r+2s$ $1$ entries.  Fulek's general lower bound construction has
$6n-8$ $1$ entries, so
\[
6n-8\leq \ex(n,L_3)\leq29n
\]
for $n\geq5$.  The same argument applies to an infinite family.  If
$Q_{a,b,k,\ell}$ is the light three-row matrix with column word
$1^a3^k1^b2^\ell$, where $a,b,\ell\geq1$ and $k\geq2$, then
\[
\ex(r,s,Q_{a,b,k,\ell})
\leq\bigl(5(k-1)(4b+1)+a+b+\ell-1\bigr)r+2s.
\]
This verifies a conjecture of Pettie and Tardos on linear light patterns
  for an infinite family that includes the previously unresolved
  weight-five pattern $L_3$. The proof assigns matrix entries to edges of a bar $1$-visibility
hypergraph, cuts gaps to control the multiplicity of these edges, and
charges the cuts to a noncrossing graph on the rows. 
\end{abstract}

\medskip
\noindent\textbf{Keywords:} forbidden $0$-$1$ matrix, extremal function,
bar visibility hypergraph, outerplanar graph.

\smallskip
\noindent\textbf{2020 Mathematics Subject Classification:} 05D99, 05C10.

\section{Introduction}

Rows of a matrix are ordered from top to bottom, and columns are ordered
from left to right.  Let $M$ be a $0$-$1$ matrix and let $Q$ be a
$k\times \ell$ $0$-$1$ matrix.  We say that $M$ \emph{contains} $Q$ if
there are row indices
\[
i_1<\cdots<i_k
\]
and column indices
\[
j_1<\cdots<j_\ell
\]
such that
\[
M_{i_a,j_b}=1
\qquad\text{whenever }Q_{a,b}=1.
\]
The entries selected at positions where $Q$ has a $0$ are unrestricted.
If $M$ does not contain $Q$, then $M$ \emph{avoids} $Q$.  Write $w(M)$
for the number of $1$ entries in $M$; this number is the \emph{weight} of
$M$.  Let $\ex(r,s,Q)$ be the largest
weight of an $r\times s$ $0$-$1$ matrix avoiding $Q$, and put
\[
\ex(n,Q)=\ex(n,n,Q).
\]

Fulek \cite{Fulek2009} proved linear bounds for two patterns denoted by
$L_1$ and $L_2$, denoted two further patterns by $L_3$ and $L_4$, and
proposed them as candidates for an extension of his visibility method.  In
this note, $L_3$ always denotes Fulek's matrix
\begin{equation}\label{eq:L3}
L_3=\begin{pmatrix}
1&0&0&1&0\\
0&0&0&0&1\\
0&1&1&0&0
\end{pmatrix}.
\end{equation}
In 2014 we extended Fulek's method from bar visibility
graphs to bar visibility hypergraphs \cite{GenesonShen2015}.  For every
$s\geq0$, that paper defined the bar $s$-visibility hypergraph of a
finite family of disjoint horizontal bars and proved that such a
hypergraph on $v$ bars has at most $(2s+3)v$ edges.
Lemma~\ref{lem:bar-visibility} below is the case $s=1$ of that bound.

The paper \cite{GenesonShen2015} used the notation $L_3$ for the
different matrix
\[
\begin{pmatrix}
0&1&1&1&0\\
1&0&0&0&0\\
0&0&0&0&1\\
0&0&1&0&0
\end{pmatrix}
\]
and proved a linear bound for that matrix.  That notation is unrelated
to the notation in \eqref{eq:L3}.  No theorem of \cite{GenesonShen2015}
states a bound for the matrix in \eqref{eq:L3}.

Fulek also considered
\[
L_5=\begin{pmatrix}
1&0&0&0&0\\
0&0&0&1&0\\
0&0&0&0&1\\
0&1&1&0&0
\end{pmatrix}.
\]
In 2009 I proved that every double permutation matrix has a linear
extremal function \cite{Geneson2009}.  Apply that theorem to the double
permutation matrix $D$ obtained by duplicating every column of the
$4\times4$ permutation matrix whose columns, from left to right, have
their $1$ entries in rows $1,4,2,3$,
\[
D=\begin{pmatrix}
1&1&0&0&0&0&0&0\\
0&0&0&0&1&1&0&0\\
0&0&0&0&0&0&1&1\\
0&0&1&1&0&0&0&0
\end{pmatrix}.
\]
The matrix obtained from $D$ by selecting columns $1,3,4,5,7$ is $L_5$.
Every $0$-$1$ matrix avoiding $L_5$ therefore avoids $D$, and hence
\[
\ex(n,L_5)\leq \ex(n,D)=O(n).
\]
Thus the linear bound for $L_5$ follows from \cite{Geneson2009},
independently of the result proved here.  Fulek observed that the
linearity of $\ex(n,L_3)$ would also yield a linear bound on
$\ex(n,L_5)$ \cite{Fulek2009}, so Theorem~\ref{thm:main} recovers the
same conclusion by that route.

Our first result determines the order of magnitude of $\ex(n,L_3)$.

\begin{theorem}\label{thm:main}
For every $n\geq 5$,
\[
6n-8\leq \ex(n,L_3)\leq 29n.
\]
Consequently, $\ex(n,L_3)=\Theta(n)$.
\end{theorem}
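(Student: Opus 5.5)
The lower bound $6n-8$ comes from Fulek's general construction, which we take as given from \cite{Fulek2009}. That construction is an $n\times n$ matrix with $6n-8$ ones avoiding every light pattern of this type. So the real content of Theorem~\ref{thm:main} is the upper bound. We obtain it from the general inequality $w(M)\le 27r+2s$ for every $r\times s$ matrix $M$ avoiding $L_3$; setting $r=s=n$ gives $29n$. We also note that $L_3$ is the light matrix with column word $1\,3\,3\,1\,2$, that is, $Q_{1,1,2,1}$. The family bound then gives $5(k-1)(4b+1)+a+b+\ell-1=25+2=27$ as the coefficient of $r$, so Theorem~\ref{thm:main} follows once that bound is proved. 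The plan below is the proof of that bound, specialized to $L_3$.

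\textbf{Step 1: reduce to bars.} Fix $M$ avoiding $L_3$. In each column we delete a bounded number of extreme $1$ entries; this accounts for the $2s$ term (for example, the top and bottom $1$ in each column). For each row $i$, think of consecutive $1$ entries of row $i$ as endpoints of horizontal segments. The pattern of $L_3$ in the row-$1$ and row-$3$ entries has the form $1,3,3,1$: a row-$3$ pair lies strictly between two row-$1$ entries, and a row-$2$ entry comes later. Avoidance therefore forces a strong restriction. If a gap between consecutive $1$'s in a lower row $i_3$ is nested inside a gap of an upper row $i_1$, and some intermediate row $i_2$ has a $1$ to the right, we get a copy of $L_3$.

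\textbf{Step 2: assign entries to hyperedges.} Each remaining $1$ entry $(i,j)$ is assigned to an edge of a bar $1$-visibility hypergraph. The bars are the rows, realized as horizontal bars at heights given by row order. The entry is charged to the pair or triple of rows that ``see'' each other through the column $j$ region, with at most one bar blocking. By Lemma~\ref{lem:bar-visibility} with $s=1$, there are at most $5r$ distinct edges. The difficulty is multiplicity: many entries may be assigned to the same edge.

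\textbf{Step 3: control multiplicity by cutting gaps.} This is the main obstacle. Avoidance of $L_3$ should imply the following. Once an edge between rows $i_1<i_3$ has received more than a bounded number of entries, namely $(k-1)(4b+1)=5$ for $L_3$, some gap must be cut. Cutting a gap splits a bar into two bars, which increases the bar count. Otherwise two nested row-$3$ entries plus a later entry in a middle row would form $L_3$. So we refine the bar family by cutting gaps whenever multiplicity would exceed $5$. The number of cuts must then be bounded linearly in $r$. We plan to charge each cut to an edge of an auxiliary graph on the row set and show that this graph is noncrossing (outerplanar) with respect to the row order. Crossing charged pairs $i_1<i_2<i_3<i_4$ would combine into an $L_3$ copy, using the witness entries that caused the cuts. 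An outerplanar graph has at most $2r$ edges, so the bar count stays $O(r)$.

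\textbf{Step 4: tally.} Steps 2 and 3 give at most $5\cdot 5$ entries per hyperedge class, plus the $a+b+\ell-1=2$ leftover entries per row near the ends. This yields $27r+2s$. The constants are what we aim for; the essential content, and where most of the care goes, is the noncrossing argument in Step 3.
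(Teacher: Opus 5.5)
Your first paragraph is exactly the paper's proof of Theorem~\ref{thm:main}. The paper applies Theorem~\ref{thm:family} with $a=b=\ell=1$, $k=2$ (coefficient $5\cdot1\cdot5+2=27$) and $r=s=n$. It gets $6n-8$ from Fulek's Proposition~1, $\ex(n,P)\geq n(e+f-2)-(e-1)(f-1)$, with $e=3$, $f=5$. If Theorem~\ref{thm:family} is taken as given, you are done. But you say the result follows ``once that bound is proved'' and offer Steps~1--4 as that proof. As a proof, the plan has genuine gaps.

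\textbf{The tally does not follow from your own steps.} The paper cuts as soon as an edge carries $k=2$ assigned points, so at the end every edge carries at most $k-1=1$ point. The factor $4b+1=5$ is not a multiplicity threshold. It bounds the final number of components, $V\leq r+2K\leq r+2b(2r-3)\leq 5r$, and then $|S|\leq 5V\leq 25r$. If edges may keep up to $5$ points while cuts also create new bars, you only get $|S|\leq 25V$, which is not $25r$.

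\textbf{Step~3 lacks the ideas that make cutting and charging work.}
\begin{itemize}
\item The column trimming must remove the top two $1$ entries, not the top and bottom ones. Then the repeated points lie in the lowest row $z$ of their edge, with the component $C_y$ covering them.
\item Lemma~\ref{lem:localization} then identifies the gap to cut: the gap $(u,v)$ of the \emph{top} row $x$ with $u\leq q_k<v$. Writing $E_i$ for the column of the $\ell$th original $1$ from the right in row $i$, it proves $u<E_y\leq d_b\leq E_x$ and $E_w\leq d_b$ for all $x<w<z$.
\item You also need that no gap is used twice, and that each cut (main gap plus auxiliary notch) adds at most two components.
\item Noncrossing does not come from crossing pairs ``combining into an $L_3$ copy,'' and it depends on which pair a cut is charged to. For $(x,y)$ or $(x,z)$, the inequalities above do not exclude a crossing, since all $E$ values could be equal. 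The paper charges the cut to $(x,p)$, with $p$ the \emph{first} row attaining $\max\{E_w:x<w<z\}$. A crossing $x<x'<p<p'$ then gives $E_{x'}<E_p<E_{p'}\leq E_{x'}$.
\item Outerplanarity bounds only the number of distinct pairs, at most $2r-3$. You must also show that each pair is charged by at most $b$ cuts. This follows from $c_j<E_p\leq c_{j+b}$ for the main gap $(c_j,c_{j+1})$, together with the fact that distinct cuts use distinct gaps.
\end{itemize}
Without these ingredients, neither linearity nor the constant $27$ follows from your plan.
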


The upper bound is new.  The lower bound is Fulek's general
construction, Proposition~1 of \cite{Fulek2009}.

The argument proves more.  A $0$-$1$ matrix is \emph{light} if every column
contains
at most one $1$ entry.  In a light matrix with no zero column, every column
contains exactly one $1$ entry.  The \emph{column word} of such a matrix
records, for each column from left to right, the row containing the $1$
entry of that column.  For positive integers $a,b,k,\ell$, let
$Q_{a,b,k,\ell}$ be the light three-row matrix with column word
\begin{equation}\label{eq:family-word}
1^a3^k1^b2^\ell.
\end{equation}
Thus
\[
Q_{1,1,2,1}=L_3.
\]

\begin{theorem}\label{thm:family}
Let $a,b,\ell\geq1$ and $k\geq2$.  Every $r\times s$ $0$-$1$ matrix $M$
that avoids $Q_{a,b,k,\ell}$ satisfies
\[
w(M)\leq
\bigl(5(k-1)(4b+1)+a+b+\ell-1\bigr)r+2s.
\]
Consequently,
\[
\ex(n,Q_{a,b,k,\ell})=\Theta(n).
\]
\end{theorem}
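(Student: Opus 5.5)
The plan follows the strategy announced in the abstract. Entries are assigned to edges of a bar $1$-visibility hypergraph, the multiplicity of those edges is controlled by cutting gaps, and the cuts are charged to a noncrossing graph on the rows.

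\textbf{Step 1: peel off cheap entries.} In each row we delete the first $a-1$ and the last $\ell-1$ ones; this costs at most $(a+\ell-2)r$. In each column we delete the topmost and bottommost one; this costs at most $2s$. Every surviving $1$ entry $e$ in row $i$ and column $j$ then has $a-1$ ones to its left and $\ell-1$ ones to its right in its row, and a one above and a one below it in its column. These are the anchors that will later supply the $1^a$ block (in a row above) and the $2^\ell$ block (in a row in between).

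\textbf{Step 2: bars and the hypergraph.} For each row, we view the maximal runs of consecutive remaining ones in that row as horizontal bars. Rows are placed at heights, and for each column we consider the vertical segment between consecutive surviving ones. Each surviving entry is assigned to an edge of the bar $1$-visibility hypergraph, namely the hyperedge formed by its bar together with the bars seen through at most one intervening bar. Lemma~\ref{lem:bar-visibility} bounds the number of distinct edges by $5v$, where $v$ is the number of bars. Since $v$ may exceed $r$, we first cut each row into \emph{gaps}: intervals between consecutive charged entries. The cut rule is that a new bar is started whenever continuing would let $k-1$ further entries share a hyperedge with the same lower row in a way that, combined with $b$ later ones, would create $3^k1^b$. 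Concretely, if a fixed edge receives more than $(k-1)(4b+1)$ entries, then $k$ of them sit in the lower row ($3^k$) and $b$ of them sit in the upper row after them ($1^b$). The anchors from Step~1 then complete a copy of $Q_{a,b,k,\ell}$, with the $1^a$ block to the left in the top row and the $2^\ell$ block to the right in a middle row, the latter provided by the bar seen through in the $1$-visibility. This gives a multiplicity of at most $(k-1)(4b+1)$ per edge, hence the factor $5(k-1)(4b+1)$ per bar.

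\textbf{Step 3: charging the cuts (main obstacle).} The issue is that cutting creates extra bars, so $v$ is no longer at most $r$. We charge each cut to a pair of rows (the row cut and the row responsible for the witness) and argue that these pairs form a noncrossing (outerplanar) graph on the rows in their vertical order. Crossing pairs $i<i'<j<j'$ would interleave witness columns and yield the pattern, using the nesting of $1$s in rows $i,j$ versus $i',j'$. An outerplanar graph has fewer than $2r$ edges, and each pair can be charged boundedly often (at most $b$ times, giving the extra $b$ term). Combining this with Step~2 gives total bars $O(r)$ with the stated constant $5(k-1)(4b+1)$ absorbing it. I expect the noncrossing argument to be the hardest part, in particular verifying that the witness for a cut really lies in a consistent row so that the chords cannot cross. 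I would first try choosing, for each cut, the nearest row below whose entry blocks visibility, and then derive a contradiction from two crossing chords via an explicit embedding of $1^a3^k1^b2^\ell$.

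Summing the three contributions gives the bound $\bigl(5(k-1)(4b+1)+a+b+\ell-1\bigr)r+2s$. The $\Theta(n)$ statement then follows, since a linear lower bound is trivial (for instance, a single nonzero column or Fulek's construction).
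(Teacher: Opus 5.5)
Your three-part architecture matches the paper's, but the step meant to bound edge multiplicities fails. In Step~2 you claim that once an edge receives many entries, the anchors complete $Q_{a,b,k,\ell}$ with the $2^\ell$ block ``to the right in a middle row.'' Visibility only tells you that the middle bar covers the columns $q_1<\cdots<q_k$. Nothing places the last $\ell$ ones of the middle row to the right of the $b$ top-row ones that follow $q_k$. With only $\ell-1$ ones trimmed at the right end, those $b$ ones need not even exist.

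In fact no bound of this kind holds before cutting. For $L_3$, let row $z$ have ones in columns $5$, $q_j=4j+6$ for $1\le j\le N$, and $4N+7$. Let row $y<z$ have ones in $3,4,4N+8,4N+9$, and let row $x<y$ have ones in $1,2,4N+10,4N+11$. Above row $x$ place, for each $j$, two rows with ones in $q_j-1,q_j,q_j+1$. This matrix avoids $L_3$. Yet after trimming the first and last one of every row and drawing one bar per row, each $(z,q_j)$ lies below two surviving entries of its column and first sees the bars of rows $y$ and $x$. So one edge carries $N$ entries.

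The paper therefore uses avoidance to \emph{locate a cut}, not to reach a contradiction. It deletes the first $a$ and last $b+\ell-1$ ones of every row; this, not your $(a+\ell-2)r$, is the source of the $(a+b+\ell-1)r$ term. Let $E_w$ be the column of the $\ell$th-from-last one of row $w$, let $d_b$ be the $b$th one of row $x$ after $q_k$, and let $(u,v)$ be the gap between consecutive ones of row $x$ with $q_k\in[u,v)$. Then $k$ entries on an edge with rows $x<y<z$ force $u<E_y\le d_b\le E_x$, and $E_w\le d_b$ for all $x<w<z$. Removing that gap destroys the edge. An auxiliary notch is removed as well, leaving $\{u\}$ as an isolated component so that no gap is ever reused. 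One then iterates, recomputing assignments, until every edge carries at most $k-1$ entries.

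Your constant is also assembled inconsistently. You use $4b+1$ both as the per-edge multiplicity and as the growth factor for the number of bars, which would give $5(k-1)(4b+1)^2$. In the paper the multiplicity is $k-1$, each cut adds at most two components, and there are at most $b(2r-3)$ cuts. So there are at most $(4b+1)r$ components, and at most $5(k-1)(4b+1)r$ charged entries.

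The noncrossing step you leave open hinges on the choice of witness. The paper pairs each cut with $(x,p)$, where $p$ is the \emph{first} row attaining $\max\{E_w:x<w<z\}$. This gives $u<E_y\le E_p\le d_b\le E_x$ and $E_j<E_p$ for $x<j<p$. Crossing pairs $x<x'<p<p'$ would then yield $E_{x'}<E_p<E_{p'}\le E_{x'}$. If the witness is the middle row $y$, which is what your ``nearest blocking row'' amounts to, the available inequalities give only $E_{x'}\le d_b$. That is weaker than the strict $E_{x'}<E_y$ needed to close this chain.

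Finally, write $c_1<\cdots<c_m$ for the ones of row $x$. The bound of $b$ cuts per pair needs two ingredients: $c_j<E_p\le c_{j+b}$ when the main gap is $(c_j,c_{j+1})$, and the fact that distinct cuts use distinct gaps. Your proposal asserts the bound without either.
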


Theorem~\ref{thm:family} gives the rectangular estimate
$\ex(r,s,L_3)\leq27r+2s$ when $a=b=\ell=1$ and $k=2$.

Pettie and Tardos \cite[Conjecture~4.2]{PettieTardos2025} considered
  the light patterns
  \[
  Q_3=
  \begin{pmatrix}
  1&0&1&0\\
  0&1&0&1
  \end{pmatrix},
  \qquad
  Q_3'=
  \begin{pmatrix}
  1&0&0&0\\
  0&0&1&0\\
  0&1&0&1
  \end{pmatrix}.
  \]
  Let $\mathcal Q_{\mathrm{PT}}$ be the set of distinct patterns obtained
  from $Q_3$ and $Q_3'$ by reflecting rows, columns, or both.  Thus
  $\mathcal Q_{\mathrm{PT}}$ consists of six patterns: its two two-row
  members have column words
  \[
  1212,\qquad 2121,
  \]
  and its four three-row members have column words
  \[
  1323,\qquad 3231,\qquad 3121,\qquad 1213.
  \]
  Call a pattern $\mathcal Q_{\mathrm{PT}}$-free if it contains no member
  of $\mathcal Q_{\mathrm{PT}}$.  Pettie and Tardos conjectured that every
  light $\mathcal Q_{\mathrm{PT}}$-free pattern has a linear extremal
  function.  They noted that the conjecture remained unconfirmed for
  some light patterns of weight five.

  Every $Q_{a,b,k,\ell}$ is $\mathcal Q_{\mathrm{PT}}$-free.  Indeed, on
  any two of its rows its column word has no alternating subsequence of
  length four, so it contains neither of the two-row members.  Moreover,
  none of the four three-row words displayed above is a subsequence of
  $1^a3^k1^b2^\ell$.  Thus Theorem~\ref{thm:family} verifies the
  Pettie--Tardos conjecture for an infinite family, including $L_3$, a
  previously unresolved pattern of weight five.

Section~\ref{sec:family} states the edge bound of
\cite{GenesonShen2015} for bar $1$-visibility hypergraphs and proves
Theorem~\ref{thm:family}. 

\section{The family bound}\label{sec:family}

Bar $s$-visibility hypergraphs were introduced in
\cite{GenesonShen2015}.  We restrict to the case $s=1$.  A \emph{bar} is a
nonempty closed horizontal line segment in the plane.  A
bar is allowed to consist of one point.  Let $\mathcal B$ be a finite family
of pairwise disjoint bars.  Three bars of $\mathcal B$ form a \emph{bar
$1$-visibility edge}
if a vertical line segment meets those three bars and no other member of
$\mathcal B$.  Let $H_1(\mathcal B)$ be the $3$-uniform hypergraph whose
vertices are the bars and whose edges are the bar $1$-visibility edges.

\begin{lemma}[Geneson--Shen \cite{GenesonShen2015}]\label{lem:bar-visibility}
If $\mathcal B$ consists of $v$ bars, then $H_1(\mathcal B)$ has at most
$5v$ edges.
\end{lemma}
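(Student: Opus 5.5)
We prove the bound by induction on the number $v$ of bars, removing one well-chosen bar and checking that at most five edges disappear.

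\emph{Normalization.} First we put $\mathcal B$ in general position: no two bars have endpoints with the same $x$-coordinate, and no two bars lie at the same height. Enlarging each bar slightly, and perturbing heights, preserves every existing edge. An edge is witnessed by an open set of vertical segments, because a segment that meets exactly three bars can be shifted slightly without meeting new bars. So the edge count can only go up, and it suffices to prove the bound in general position.

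\emph{Choice of bar.} Let $B$ be the bar whose left endpoint has the largest $x$-coordinate. The case $v\le 3$ is immediate, since there is at most one triple. The goal is to show that at most $5$ edges of $H_1(\mathcal B)$ contain $B$ and fail to be edges of $H_1(\mathcal B\setminus\{B\})$. Note that the reverse direction causes no trouble: an edge of $H_1(\mathcal B)$ not containing $B$ is witnessed by a segment avoiding $B$, so it remains an edge after deleting $B$. Hence
\[
|H_1(\mathcal B)|\le |H_1(\mathcal B\setminus\{B\})|+|\{\text{edges containing }B\}|.
\]
This crude inequality is not enough, because $B$ may lie in many edges. The fix is to charge edges more carefully. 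We sweep a vertical line from left to right and record the combinatorial events where the vertical ordering of bars crossing the line changes. At each such event, a bar is inserted into or deleted from the current ordered list. Each edge of $H_1$ is three consecutive bars in the list at some moment.

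\emph{Charging.} We charge each edge to the first event at which its triple becomes three consecutive bars. When a bar $X$ is inserted between neighbors $P$ and $Q$, the new consecutive triples in the ordered list are $(P',P,X)$, $(P,X,Q)$ and $(X,Q,Q')$, which is three triples. When a bar is deleted, the new triples are those straddling the gap it leaves, which is two triples. In general position each bar contributes one insertion and one deletion, so there are at most $3+2=5$ new triples per bar. This gives at most $5v$ edges.

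The main obstacle is justifying the two counts. For deletions, removing $X$ from the list $\dots,P',P,X,Q,Q',\dots$ creates exactly the new consecutive triples $(P',P,Q)$ and $(P,Q,Q')$. For insertions, the three triples listed above are the only new ones. The remaining point is to handle boundary cases, where $X$ is at the top or bottom of the list, and to confirm that fewer triples arise there. With those checks the count of at most $5$ new triples per bar holds, and the bound $5v$ follows.
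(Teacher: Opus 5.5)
Your sweep-line charging is correct: every edge is three consecutive bars in the height-ordered list met by some vertical line, an insertion creates at most three new such triples and a deletion at most two, so there are at most $5v$ edges. This is essentially the sweep argument behind the Geneson--Shen bound $(2s+3)v$, with $2s+3=(s+2)+(s+1)$, which the paper only cites and does not reprove.

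Two pieces of the write-up should be removed. The opening induction through the bar $B$ is never used, and its stated goal is false in general: removing $B$ destroys exactly the edges containing $B$, and $B$ can lie in arbitrarily many edges. The general-position step is also unnecessary, and as justified it is flawed. Your claim that witnesses form an open set fails for the one-point bars the paper allows. Instead, at a shared $x$-coordinate, process all insertions before all deletions; every vertical-line list then already appears in the sequence, and one-point bars are handled with no perturbation.
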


Lemma~\ref{lem:bar-visibility} is the case $s=1$ of the bound $(2s+3)v$
proved in \cite{GenesonShen2015}.   Wang
\cite{Wang2026} recently determined the exact maximum number of edges
in a bar $s$-visibility hypergraph on $v$ vertices: it is
$(2s+3)v-3s^2-8s-6$ for $v\geq 2s+3$, which is $5v-17$ in the case
$s=1$.  The estimate $5v$ suffices for our purposes.

For the remainder of this section, fix $a,b,\ell\geq1$ and $k\geq2$, and
let $M$ be an $r\times s$ $0$-$1$ matrix that avoids $Q_{a,b,k,\ell}$.

\subsection{Trimming and the bar representation}

Put
\[
t=b+\ell-1.
\]
The $1$ entries of $M$ are called \emph{original}.  In each row of $M$,
delete every $1$ entry that is among the first $a$ $1$
entries of its row or among the last $t$ $1$ entries of its row.
Let $A$ be the resulting matrix.  At most $(a+t)r$ entries are deleted.

Let $S$ be the set of $1$ entries of $A$ other than the top two $1$ entries
in each column.  If a column has fewer than two $1$ entries, exclude all of
them from $S$.  The excluded entries remain in $A$.  At most $2s$ entries
are excluded, so
\begin{equation}\label{eq:initial-accounting}
w(M)\leq |S|+(a+b+\ell-1)r+2s.
\end{equation}

Assign to each row $i$ a height $h_i$, with $h_i>h_j$ whenever $i<j$.
Identify the $1$ entry of $A$ in row $i$ and column $j$ with the point whose
horizontal coordinate is $j$ and whose height is $h_i$.  Within one row, we
also identify a $1$ entry with its column, so that inequalities between $1$
entries of one row and columns compare columns.  For every nonempty row $i$
of $A$, draw the closed horizontal bar at height $h_i$ joining its
first and last $1$ entries; this bar is the \emph{initial bar} of
row $i$.  We shall remove open intervals from the bars.
The connected components of the resulting sets are closed intervals; they
are called components, and their current
family is denoted by $\mathcal B$.  A component contained in the initial bar
of row $i$ is called a row-$i$ component.  The \emph{horizontal projection}
of a component is the set of horizontal coordinates of its points; the
vertical line through column $q$ meets a component if and only if $q$ lies
in its horizontal projection.  All row-$i$ components lie at height $h_i$,
so a row-$i$ component is determined by its horizontal projection.  When the
row is fixed, we write a row-$i$ component as its horizontal projection;
thus $\{u\}$ denotes the one-point row-$i$ component at column $u$.  A
row-$i$ component \emph{meets} an interval of columns if its horizontal
projection intersects that interval.  Initially, $|\mathcal B|\leq r$.

No removal below deletes a $1$ entry of $A$.  Consequently, throughout the
construction every $1$ entry of $A$ lies on exactly one component.  The
components in one row are pairwise disjoint closed intervals at one
height, so a vertical line meets at most one component of each row, and
distinct components met by one vertical line lie in distinct rows.  Let
$(z,q)\in S$.  The top two $1$ entries of $A$ in column $q$ lie in rows
above row $z$ and lie on components, so at least two components meet the
vertical line through column $q$ at heights above $(z,q)$.

For each $(z,q)\in S$, let $C_z$ be the component containing $(z,q)$, let
$C_y$ be the lowest component that meets the vertical line through column
$q$ above $(z,q)$, and let $C_x$ be the second lowest.  Let $x$ and $y$ be
the rows of $C_x$ and $C_y$.  Then $x<y<z$.  The vertical segment from
$(z,q)$ to $C_x$ meets exactly the components $C_x$, $C_y$, and $C_z$:
any other component meeting this segment would lie above $(z,q)$ and at a
height at most that of $C_x$, contradicting the choice of $C_y$ and
$C_x$.  Hence $\{C_x,C_y,C_z\}$ is an edge of $H_1(\mathcal B)$, and we
assign $(z,q)$ to this edge.  Two points of $S$ assigned to one edge lie
on the lowest component of that edge, hence in one row and in distinct
columns.

For each row $i$, let $E_i$ be the column of its $\ell$th original $1$
entry from the right.  Set $E_i=-\infty$ if row $i$ has fewer than $\ell$
original $1$ entries.

\subsection{The gap forced by a multiple edge}

\begin{lemma}\label{lem:localization}
Suppose that $k$ points of $S$ in columns
\[
q_1<\cdots<q_k
\]
are assigned to a single edge of $H_1(\mathcal B)$, and let $C_x$, $C_y$,
and $C_z$ be the components of this edge in rows $x<y<z$.  Then row $x$ has
at least $b$ original $1$ entries after $q_k$; let $d_1<\cdots<d_b$ be the
first $b$ of them.  The last original row-$x$ $1$ entry at or before $q_k$
and the first original row-$x$ $1$ entry after $q_k$ exist; call them $u$
and $v$.  These two entries are consecutive, $u$ is a $1$ entry of $A$, and
\begin{equation}\label{eq:local-chain}
q_k\in[u,v),
\qquad
u<E_y\leq d_b\leq E_x.
\end{equation}
Moreover,
\begin{equation}\label{eq:intermediate-endpoints}
E_w\leq d_b
\qquad\text{for every }x<w<z.
\end{equation}
\end{lemma}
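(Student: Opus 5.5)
The plan is to extract everything from two facts. First, every component lies inside the initial bar of its row, and that bar runs from the first to the last $1$ entry of $A$ in the row. Second, the trimming that produced $A$ deleted the first $a$ and the last $t=b+\ell-1$ original entries of each row. The inequality $d_b\le E_x$, the inequality $u<E_y$, and the existence of $u$, $v$, $d_1,\dots,d_b$ should all be counting statements of this kind. Only \eqref{eq:intermediate-endpoints}, and with it $E_y\le d_b$, needs the avoidance of $Q_{a,b,k,\ell}$.

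\emph{Counting in row $x$.} For every $i$, the vertical line through $q_i$ meets $C_x$, so $q_i$ lies in the initial bar of row $x$. Let $f_x$ and $p_x$ be the first and last $1$ entries of $A$ in row $x$. Then $f_x\le q_1$ and $p_x\ge q_k$. Because $p_x$ survived trimming, at least $t$ original row-$x$ entries lie strictly after $p_x$, hence strictly after $q_k$. As $t\ge b$, the entries $d_1<\dots<d_b$ exist.

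Let $m\ge t$ be the number of original row-$x$ entries after $q_k$. Then $d_b$ has index $b$ among them, while the $\ell$th entry from the right has index $m-\ell+1\ge b$. This gives $d_b\le E_x$.

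The entry $f_x\le q_k$ is original, so $u$ exists, and $v=d_1$ exists. By definition $u$ and $v$ are consecutive original entries with $u\le q_k<v$, which is $q_k\in[u,v)$. To see that $u$ is an entry of $A$, note that $f_x\le u\le q_k\le p_x$. So $u$ is neither among the first $a$ nor among the last $t$ original entries of row $x$.

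\emph{Counting in row $y$.} The same argument applies to $C_y$, which also meets column $q_k$. The last $A$-entry $p_y$ of row $y$ satisfies $p_y\ge q_k$, and at least $t\ge\ell$ original entries follow it. Hence $E_y>p_y\ge q_k\ge u$.

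\emph{Avoidance step.} Fix $w$ with $x<w<z$ and suppose $E_w>d_b$. Then row $w$ has $\ell$ original entries strictly after $d_b$. Since $f_x$ is not among the first $a$ original entries of row $x$, row $x$ has $a$ original entries strictly before $f_x\le q_1$. The row-$z$ points $q_1<\dots<q_k$ are original, because they lie in $S\subseteq A$. Reading the columns from left to right, we get the following original entries in rows $x<z$ and $w$:
\[
\text{$a$ entries of row $x$, then $k$ of row $z$, then $b$ of row $x$, then $\ell$ of row $w$.}
\]
The columns are strictly increasing across the blocks. The rows are in the order $x<w<z$, matching the pattern rows $1<2<3$. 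So $M$ would contain $Q_{a,b,k,\ell}$, which is a contradiction. Therefore $E_w\le d_b$ for all $x<w<z$. Taking $w=y$ gives $E_y\le d_b$ and completes \eqref{eq:local-chain}.

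\emph{Expected obstacle.} The main point to check is the strictness at each block boundary in this embedding:
\begin{itemize}
\item the $a$ initial entries are strictly before $f_x\le q_1$;
\item the $d_j$ are strictly after $q_k$;
\item the row-$w$ entries are strictly after $d_b$.
\end{itemize}
A second point is that every argument above uses only the initial bars, never the current components. It is therefore unaffected by whatever open intervals have been removed. The hypothesis $k\ge2$ plays no role here; it matters only later.
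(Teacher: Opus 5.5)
Your proposal is correct and follows essentially the same route as the paper. The existence of $d_1,\dots,d_b$, $u$ and $v$, the fact that $u$ is an entry of $A$, and the bounds $u<E_y$ and $d_b\le E_x$ all come from the trimming together with $C_x,C_y$ lying in their initial bars, while $E_w\le d_b$ comes from embedding $1^a3^k1^b2^\ell$ in rows $x<w<z$. The only differences are cosmetic: you get $E_y\le d_b$ as the case $w=y$ rather than proving it first, and you should add the one-line reason, noted in the paper, that all $k$ points lie on the lowest component $C_z$ and hence in row $z$.
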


\begin{proof}
Each of the $k$ points lies on the lowest component of the edge, which is
$C_z$, so each point is a $1$ entry of $A$ in row $z$.  For each $i$, the
vertical segment that assigns the point in column $q_i$ to the edge meets
$C_x$, $C_y$, and $C_z$.  Hence the vertical line through $q_i$ meets all
three components, and $q_i$ lies in the horizontal projection of each of
them.

The component $C_y$ is contained in the initial bar of row $y$.  Write the
original row-$y$ $1$ entries as $f_1<\cdots<f_h$.  Since this initial bar is
nonempty, it is $[f_{a+1},f_{h-t}]$.  The column $q_k$ lies in the
horizontal projection of $C_y$, so
\[
q_k\leq f_{h-t}<f_{h-\ell+1}=E_y,
\]
where the strict inequality follows from $t=b+\ell-1$ and $b\geq1$.
Therefore
\begin{equation}\label{eq:q-before-Ey}
q_k<E_y.
\end{equation}
Write the original row-$x$ $1$ entries as
\[
c_1<\cdots<c_m.
\]
The component $C_x$ is contained in the initial bar $[c_{a+1},c_{m-t}]$ of
row $x$, and $q_1$ and $q_k$ lie in the horizontal projection of $C_x$.
Hence $c_{a+1}\leq q_1$, so the first $a$ original row-$x$ $1$
entries precede $q_1$, and $q_k\leq c_{m-t}$.  Since $t\geq b$, at least
$b$ original row-$x$ $1$ entries follow $q_k$.  Thus
$d_1,\ldots,d_b$ exist.

If $d_b<E_y$, use rows $x,y,z$.  In row $x$, select its first $a$ original
$1$ entries and the entries in columns $d_1,\ldots,d_b$.  In row $z$,
select the entries in columns $q_1,\ldots,q_k$.  In row $y$, select its last
$\ell$ original $1$ entries.  Their columns occur in the order
\[
1^a3^k1^b2^\ell,
\]
so they form $Q_{a,b,k,\ell}$.  This contradicts the choice of $M$.
Therefore
\begin{equation}\label{eq:db-after-Ey}
E_y\leq d_b.
\end{equation}

Since $c_{a+1}\leq q_1\leq q_k$, the last original row-$x$ $1$ entry at or
before $q_k$ exists; write it as $u=c_j$.  The bounds
$c_{a+1}\leq q_k\leq c_{m-t}$ give
\[
a+1\leq j\leq m-t.
\]
Thus $u$ belongs to $A$, the entry $v=c_{j+1}$ exists, and $u,v$ are
consecutive with $q_k\in[u,v)$.  Equation~\eqref{eq:q-before-Ey} gives
$u<E_y$.  Since $v$ is the first original $1$ entry after $q_k$, we have
$d_b=c_{j+b}$.  Therefore
\[
d_b=c_{j+b}\leq c_{m-t+b}=c_{m-\ell+1}=E_x.
\]
Together with \eqref{eq:q-before-Ey} and \eqref{eq:db-after-Ey}, this proves
\eqref{eq:local-chain}.

Finally, let $x<w<z$.  If $E_w>d_b$, use the same row-$x$ and row-$z$
entries as above and the last $\ell$ original row-$w$ $1$ entries.  They
again occur in the order $1^a3^k1^b2^\ell$, producing
$Q_{a,b,k,\ell}$.  Thus $E_w\leq d_b$.
\end{proof}

\subsection{Cutting the gaps}

A \emph{gap} of row $i$ is an open interval whose endpoints are consecutive
original row-$i$ $1$ entries.  A gap carries its row: gaps of distinct rows
are distinct objects, even when their intervals coincide.  A \emph{cut}
removes one gap of one row from the components of that row and, in some cases, one further open interval, the
\emph{auxiliary notch}, described below; the two removals together form one
cut.  The removed gap is the \emph{main gap} of the cut, and a gap
is \emph{used} when it is the main gap of a cut.

Suppose that some edge of $H_1(\mathcal B)$ has at least $k$ assigned
points.  These points lie in distinct columns.  Apply
Lemma~\ref{lem:localization} to the $k$ assigned points with the smallest
columns.  The lemma produces consecutive original row-$x$ $1$ entries $u$
and $v$, so $(u,v)$ is a gap of row $x$; it is the main gap of the present
cut, which is made in row $x$.  Inductively, every used gap $(u',v')$ of a
row $x'$ satisfies the following property: no current row-$x'$ component
meets the open interval $(u',v')$, and $\{u'\}$ is a current row-$x'$
component.
This property is established below at the cut using each gap and shown
to persist under all later cuts.  The present main gap has not been used
before.  If it had been used, then no current row-$x$ component would meet
$(u,v)$, and the only row-$x$ component meeting the vertical line through
$u$ would be $\{u\}$.  Since $q_k\in[u,v)$ and $q_1<q_k<v$, no single
row-$x$ component would meet the vertical lines through both $q_1$ and
$q_k$.  This contradicts the fact that both of these vertical lines meet
$C_x$.

Remove the open interval $(u,v)$ from the current row-$x$ components.  If
the row-$x$ component containing $u$ also contains a point to the left of
$u$, let $f$ be the preceding original row-$x$ $1$ entry.  The entry at $u$
belongs to $A$ by Lemma~\ref{lem:localization}, so $f$ exists.  Choose
$\varepsilon>0$ such that
\[
\varepsilon<u-f
\qquad\text{and}\qquad
[u-\varepsilon,u]
\text{ lies in the row-$x$ component that contained }u,
\]
and remove $(u-\varepsilon,u)$ from that component.  This is the auxiliary
notch.  If the row-$x$ component containing $u$ has no point to the left
of $u$, no auxiliary notch is made.  In either case, after the main gap and the
possible auxiliary notch are removed, $\{u\}$ is a current row-$x$
component and no row-$x$ component meets $(u,v)$.

No $1$ entry of $A$ is removed: both removals delete only points of
row-$x$ components, the main gap contains no original row-$x$
$1$ entry, and the auxiliary notch lies between the consecutive original
row-$x$ entries $f$ and $u$.  Immediately before the cut, at most one
row-$x$ component meets the main gap.  To verify this, observe that the only earlier removal from
row $x$ that can lie properly inside $(u,v)$ is an auxiliary notch
immediately to the left of $v$: the endpoints of every gap of row $x$ are
original row-$x$ $1$ entries, every auxiliary notch in row $x$ adjoins an
original row-$x$ $1$ entry on its right, and there is no original row-$x$
$1$ entry strictly between $u$ and $v$.  Such a notch
leaves at most the part adjoining $u$ in a row-$x$ component.  Removing the
main gap therefore increases the number of components by at most one.  The
auxiliary notch splits at most one component.  Each cut increases
$|\mathcal B|$ by at most two.

After the cut, the $k$ chosen points are no longer assigned to one common
edge whose components lie in rows $x$, $y$, and $z$: such an edge would
require one row-$x$ component to meet the vertical lines through all of
$q_1,\ldots,q_k$.  If $u<q_k<v$, then no row-$x$ component meets the
vertical line through $q_k$.  If $q_k=u$, then the only row-$x$ component
meeting the vertical line through $q_k$ is $\{u\}$, and $\{u\}$ does not
meet the vertical line through $q_1<u$.  Since $\{u\}$ is a current
row-$x$ component and no row-$x$ component meets $(u,v)$, the inductive
property holds for the present main gap at the moment of its cut.  The
property persists under every later cut.  Consider a used gap $(u',v')$ of
a row $x'$.  Cuts only remove points, so every later row-$x'$ component is
contained in a row-$x'$ component current immediately after the cut that
used $(u',v')$; no later row-$x'$ component therefore meets $(u',v')$.
Moreover, no cut removes a $1$ entry of $A$, and $u'$ is a $1$ entry of $A$
by Lemma~\ref{lem:localization}, so $u'$ always lies on a row-$x'$
component; this component is contained in the row-$x'$ component $\{u'\}$
current immediately after the cut that used $(u',v')$, hence equals it.
After the cut, recompute all assignments.

Continue until every edge of $H_1(\mathcal B)$ has at most $k-1$ assigned
points.  A gap cannot be used twice by the preceding freshness
argument, and each row has only finitely many gaps.  The process therefore
terminates: every cut uses a gap that no earlier cut used, and there are
finitely many gaps in total.

\subsection{Counting the cuts}

Let $K$ be the number of cuts.  Consider one cut, made for an edge with
components in rows $x<y<z$, and keep the notation of
Lemma~\ref{lem:localization}.  Let $p$ be the
first row attaining
\[
\max\{E_w:x<w<z\}.
\]
This maximum is finite because the set contains $E_y$, which is finite by
\eqref{eq:local-chain}.  Equations
\eqref{eq:local-chain} and \eqref{eq:intermediate-endpoints} give
\begin{equation}\label{eq:cut-chain}
u<E_y\leq E_p\leq d_b\leq E_x.
\end{equation}
The choice of the first maximizing row also gives
\begin{equation}\label{eq:first-max}
E_j<E_p
\qquad\text{for every }x<j<p.
\end{equation}
Associate the pair $(x,p)$ with the cut.

Let $G$ be the simple graph on the rows $1,\ldots,r$ of $M$ whose edges are
the distinct associated pairs.  The graph $G$ is noncrossing in
the row order.  If two of its edges had alternating endpoints
\[
x<x'<p<p',
\]
then \eqref{eq:first-max} for $(x,p)$ would give $E_{x'}<E_p$.
Equation~\eqref{eq:first-max} for $(x',p')$ would give $E_p<E_{p'}$, and
\eqref{eq:cut-chain} for $(x',p')$ would give $E_{p'}\leq E_{x'}$.  Hence
\[
E_{x'}<E_p<E_{p'}\leq E_{x'},
\]
a contradiction.

The graph $G$ is outerplanar.  Draw the rows $1,\ldots,r$ as points on a
horizontal line and each edge $\{i,j\}$ of $G$ as the semicircle above the
line with endpoints $i$ and $j$.  Two such semicircles cross if and only if
their endpoints alternate in the row order, and no two edges of $G$ have
alternating endpoints, so this drawing is a planar embedding with every
vertex on the outer face.  Every simple outerplanar graph on $r\geq2$
vertices has at most $2r-3$ edges: it is a subgraph of a maximal
outerplanar graph on the same vertex set, and every maximal outerplanar
graph on $r\geq2$ vertices has exactly $2r-3$ edges, by Corollary~11.9(a)
of \cite{Harary1969}.  Hence $G$ has at most $2r-3$ edges when $r\geq2$.

One associated pair occurs for at most $b$ cuts.  Fix $(x,p)$ and write the
original row-$x$ $1$ entries as $c_1<\cdots<c_m$.  If the main gap of a cut
with associated pair $(x,p)$ is $(c_j,c_{j+1})$, then $d_b=c_{j+b}$, and
\eqref{eq:cut-chain} gives
\begin{equation}\label{eq:gap-multiplicity}
c_j<E_p\leq c_{j+b}.
\end{equation}
The pair $(x,p)$ is the associated pair of at least one cut, so some index
$j$ satisfies \eqref{eq:gap-multiplicity}; for that index,
$E_p\leq c_{j+b}\leq c_m$.  Let $j_0$ be the least index such that
$E_p\leq c_{j_0}$.  Every index $j$
satisfying \eqref{eq:gap-multiplicity} belongs to
\[
\{j_0-b,\ldots,j_0-1\}.
\]
There are at most $b$ such indices, and different cuts use different main
gaps.  Therefore
\begin{equation}\label{eq:number-cuts}
K\leq b(2r-3)
\end{equation}
when $r\geq3$.

\subsection{Completion of the proof}

Let $V$ be the final number of components.  If $r\geq3$, then
\eqref{eq:number-cuts} gives
\[
V\leq r+2K
\leq r+2b(2r-3)
\leq(4b+1)r.
\]
Every point of $S$ is assigned to an edge of the final hypergraph
$H_1(\mathcal B)$, and every edge has at most $k-1$ assigned points.
Lemma~\ref{lem:bar-visibility} therefore gives
\[
|S|\leq(k-1)|E(H_1(\mathcal B))|
\leq5(k-1)V
\leq5(k-1)(4b+1)r.
\]
Combining this estimate with \eqref{eq:initial-accounting} proves
\[
w(M)\leq
\bigl(5(k-1)(4b+1)+a+b+\ell-1\bigr)r+2s.
\]
If $r\leq2$, then $w(M)\leq rs\leq2s$, so the same estimate holds.

For the square lower bound, consider the $n\times n$ matrix whose first row
has every entry equal to $1$ and whose other entries are $0$.  It has
weight $n$, and it avoids every
pattern with $1$ entries in at least two rows, since a containment would
place $1$ entries of the matrix in two distinct rows.  Because $a,\ell\geq1$
and $k\geq2$, the pattern $Q_{a,b,k,\ell}$ has a $1$ entry in each of its
three rows, so the matrix avoids $Q_{a,b,k,\ell}$.  This proves
$\ex(n,Q_{a,b,k,\ell})=\Theta(n)$ and completes the proof of
Theorem~\ref{thm:family}.

\begin{proof}[Proof of Theorem~\ref{thm:main}]
Theorem~\ref{thm:family} with $a=b=\ell=1$, $k=2$, and $r=s=n$ gives
\[
\ex(n,L_3)\leq 29n.
\]
For $n\geq\max\{e-1,f-1\}$, Proposition~1 of \cite{Fulek2009} states that
every pattern $P$ with $e$ rows, $f$ columns, and at least one $1$ entry
satisfies
\[
\ex(n,P)\geq n(e+f-2)-(e-1)(f-1).
\]
The matrix $L_3$ has $e=3$ and $f=5$, so $\ex(n,L_3)\geq6n-8$.
\end{proof}

The smallest member of the family in Theorem~\ref{thm:family} that is not
obtained by setting $b=1$ is
\[
Q_{1,2,2,1}=
\begin{pmatrix}
1&0&0&1&1&0\\
0&0&0&0&0&1\\
0&1&1&0&0&0
\end{pmatrix},
\]
whose column word is $133112$.  Substitution in
Theorem~\ref{thm:family} gives the following explicit bound.

\begin{corollary}\label{cor:second-core}
Every $r\times s$ $0$-$1$ matrix $M$ that avoids $Q_{1,2,2,1}$ satisfies
\[
w(M)\leq48r+2s.
\]
In particular,
\[
\ex(n,Q_{1,2,2,1})\leq50n.
\]
\end{corollary}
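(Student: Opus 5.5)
The corollary is a direct substitution into Theorem~\ref{thm:family}, so the plan is to check the parameters and then evaluate the coefficient. First, read off from the column word $133112$ that $Q_{1,2,2,1}$ is the light three-row matrix with column word $1^a3^k1^b2^\ell$ for $a=1$, $k=2$, $b=2$, $\ell=1$. These values satisfy the hypotheses $a,b,\ell\geq1$ and $k\geq2$ of Theorem~\ref{thm:family}, so the theorem applies to every $r\times s$ matrix $M$ avoiding $Q_{1,2,2,1}$.

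Next, compute the coefficient of $r$:
\[
5(k-1)(4b+1)+a+b+\ell-1=5\cdot1\cdot9+1+2+1-1=45+3=48.
\]
This gives $w(M)\leq48r+2s$, which is the first claim.

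For the square bound, set $r=s=n$. Every $n\times n$ matrix avoiding $Q_{1,2,2,1}$ then has weight at most $48n+2n=50n$, so $\ex(n,Q_{1,2,2,1})\leq50n$.

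There is no genuine obstacle. The only point needing care is matching the parameters $(a,b,k,\ell)$ to the column word in the right order, since $k$ counts the row-$3$ block and $b$ counts the second row-$1$ block.
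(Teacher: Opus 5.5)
Your proposal is correct and is exactly the paper's argument: substituting $a=1$, $k=2$, $b=2$, $\ell=1$ into Theorem~\ref{thm:family} gives the coefficient $45+3=48$. Setting $r=s=n$ then gives the square bound $50n$.
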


Fulek's pattern $L_4$ is
\[
L_4=\begin{pmatrix}
0&1&0&0&1&0\\
1&0&0&0&0&1\\
0&0&1&1&0&0
\end{pmatrix}.
\]
Its column word is $213312$.  Fulek proposed $L_4$ together with $L_3$
and noted that the extremal functions of both are in $O(n\alpha(n))$,
where $\alpha$ is the inverse Ackermann function \cite{Fulek2009}. The method of this note does not prove a linear bound for $L_4$. 

\section*{Acknowledgments}

Codex with GPT-5.6 and Claude Code with Fable 5 were used
for proof exploration, proof criticism, exposition, and revision.

\end{document}